\setlist[itemize]{leftmargin=*}
\setlist[enumerate]{leftmargin=*}
\setlist[enumerate, 1]{label=(\alph*), ref=\alph*}
\setlist[enumerate, 2]{label=(\roman*)}
\renewcommand{\epsilon}{\varepsilon}
\renewcommand{\Pi}{\mathbb{P}}
\newtheorem{theorem}{Theorem}
\newtheorem{lemma}[theorem]{Lemma}
\theoremstyle{definition}
\numberwithin{equation}{section}
\renewcommand{\phi}{\varphi}
\newcommand{\p}{\mathfrak{p}}
\newcommand{\Z}{\mathbb{Z}}
\renewcommand{\pmod}[1]{\;(\operatorname{mod} #1)}
\renewcommand{\geq}{\geqslant}
\renewcommand{\leq}{\leqslant}
\let\oldenumerate=\enumerate
\def\enumerate{
	\oldenumerate
	\setlength{\itemsep}{5pt}
}
\let\olditemize=\itemize
\def\itemize{
	\olditemize
	\setlength{\itemsep}{5pt}
}
\title[Primitive root bias for twin primes II]{Primitive root bias for
  twin primes II: Schinzel-type theorems for totient quotients and the sum-of-divisors function}
\author{Stephan Ramon Garcia}
\address{Department of Mathematics, Pomona College, 610 N. College Ave., Claremont, CA 91711}
\email{stephan.garcia@pomona.edu}
\urladdr{\url{http://pages.pomona.edu/~sg064747}}
\author{Florian Luca}
\address{School of Mathematics, University of the Witwatersrand, Private Bag 3, Wits 2050, Johannesburg, South Africa\\
  Research Group in Algebraic Structures and Applications, King Abdulaziz University, Jeddah, Saudi Arabia\\
  Department of Mathematics, Faculty of Sciences, University of Ostrava, 30 dubna 22, 701 03
  Ostrava 1, Czech Republic}
\email{Florian.Luca@wits.ac.za}
\author{Kye Shi} \address{Department of Mathematics, Harvey Mudd
  College, 340 East Foothill Boulevard, Claremont, CA 91711-5901}
\email{kwshi@hmc.edu}
\author{Gabe Udell}
\address{Department of Mathematics, Pomona College, 610 N. College Ave., Claremont, CA 91711}
\email{grua2017@mymail.pomona.edu}
\thanks{SRG supported by NSF grant DMS-1800123.}
\subjclass[2010]{11A07, 11A41, 11N05, 11N37, 11N56}
\keywords{prime, twin prime, primitive root, Bateman--Horn conjecture,
  twin prime conjecture, prime bias, Dickson's conjecture, totient
  function, Chen's theorem}
\begin{document}

\begin{abstract}
Garcia, Kahoro, and Luca showed that the Bateman--Horn conjecture implies $\phi(p-1) \geq \phi(p+1)$ for a majority of twin-primes pairs $p,p+2$ and that the reverse inequality holds for a small positive proportion of the twin primes.  That is, $p$ tends to have more primitive roots than does $p+2$. We prove that Dickson's conjecture, which is much weaker than Bateman--Horn, implies that the quotients $\frac{\phi(p+1)}{\phi(p-1)}$, as $p,p+2$ range over the twin primes, are dense in the positive reals.  We also establish several Schinzel-type theorems, some of them unconditional, about the behavior of $\frac{\phi(p+1)}{\phi(p)}$ and $\frac{\sigma(p+1)}{\sigma(p)}$, in which $\sigma$ denotes the sum-of-divisors function.
\end{abstract}

\maketitle

\section{Introduction}

The number of primitive roots modulo a prime $p$ is $\phi(p-1)$, in
which
\begin{equation}\label{eq:Phi}
  \phi(n) =
  \Big\lvert\big\{ i \in \{1,2,\ldots,n\} : (i,n) = 1 \big\}\Big\rvert \,=\,
  n \prod_{q \mid n} \left(1 - \frac{1}{q} \right)
\end{equation}
is the Euler totient function.  In other words, $\phi(p-1)$ is the
number of generators of the multiplicative group $(\Z/p\Z)^{\times}$.
We reserve $p,q$ for prime numbers and use $(a,b)$ to
denote the greatest common divisor of $a$ and $b$.

For twin primes $p,p+2$, it is natural to ask about the relationship
between $\phi(p-1)$ and $\phi(p+1)$.  Assuming the Bateman--Horn
conjecture, Garcia, Kahoro, and Luca proved that
\begin{equation}\label{eq:Dominant}
  \phi(p-1) \geq \phi(p+1)
\end{equation}
for a majority of the twin primes \cite{PRBTP}.  Such proportions are
computed relative to the conjectured twin-prime counting function
\begin{equation*}
  \pi _{2}(x)\,\sim\, 2C_{2}\int _{2}^{x}{dt \over (\log  t)^{2}} ,
\end{equation*}
in which
\begin{equation*}
  C_2 = \prod_{p \geq 3} \frac{p(p-2)}{(p-1)^2} \approx  0.660161815
\end{equation*}
is the \emph{twin primes constant} \cite{HL, 1CRTA}.  Here $\sim$
stands for asymptotic equivalence: $f\sim g$ means
$\lim_{x\to\infty} \frac{f(x)}{g(x)} = 1$.  The proportion of twin
primes that satisfy \eqref{eq:Dominant} is at least 65\% (assuming the
Bateman--Horn conjecture), although computations suggest something
around 98\%.  Moreover, at least 0.46\% of the twin primes satisfy the
reverse inequality $\phi(p-1) \leq \phi(p+1)$ \cite{PRBTP}.  Analogous
results for prime pairs $p, \, p+k$ were obtained by Garcia, Luca, and
Schaaff \cite{GLS}.  Garcia and Luca showed unconditionally that the
split is $50/50$ if only $p$ is assumed to be prime \cite{GL}.

\begin{table}\footnotesize
  \begin{tabular}{cc|cc|cc|cc}
    $p$ & $\frac{\phi(p+1)}{\phi(p-1)}$ & $p$ & $\frac{\phi(p+1)}{\phi(p-1)}$ & $p$ & $\frac{\phi(p+1)}{\phi(p-1)}$ & $p$ & $\frac{\phi(p+1)}{\phi(p-1)}$ \\
    \midrule
    2381 & 1.03125 & 119771 & 1.0234 & 230861 & 1.03125 & 348461 & 1.02981 \\
    3851 & 1.06 & 126491 & 1.03986 & 232961 & 1.02648 & 354971 & 1.07174 \\
    14561 & 1.05208 & 129221 & 1.06786 & 237161 & 1.01061 & 356441 & 1.04177 \\
    17291 & 1.00309 & 134681 & 1.08247 & 241781 & 1.05652 & 357281 & 1.05826 \\
    20021 & 1.11806 & 136991 & 1.03558 & 246611 & 1.0571 & 361901 & 1.09268 \\
    20231 & 1.02941 & 142871 & 1.05983 & 251231 & 1.00926 & 362951 & 1.03542 \\
    26951 & 1.06857 & 145601 & 1.05313 & 259211 & 1.01637 & 371141 & 1.02375 \\
    34511 & 1.06845 & 150221 & 1.03489 & 270131 & 1.03752 & 399491 & 1.04795 \\
    41231 & 1.05926 & 156941 & 1.04382 & 274121 & 1.06364 & 402221 & 1.09235 \\
    47741 & 1.08 & 165551 & 1.0946 & 275591 & 1.01252 & 404321 & 1.01206 \\
    50051 & 1.12 & 166601 & 1.03296 & 278741 & 1.07537 & 406631 & 1.00558 \\
    52361 & 1.13594 & 167861 & 1.06481 & 282101 & 1.08833 & 410411 & 1.13514 \\
    55931 & 1.02446 & 173741 & 1.00101 & 282311 & 1.00772 & 413141 & 1.02876 \\
    57191 & 1.05026 & 175631 & 1.05845 & 298691 & 1.037 & 416501 & 1.03179 \\
    65171 & 1.02608 & 188861 & 1.04087 & 300581 & 1.03534 & 418601 & 1.1011 \\
    67211 & 1.01413 & 197891 & 1.02266 & 301841 & 1.04082 & 424271 & 1.16905 \\
    67271 & 1.0043 & 202931 & 1.05743 & 312551 & 1.04783 & 427421 & 1.00958 \\
    70841 & 1.11799 & 203771 & 1.01071 & 315701 & 1.09613 & 438131 & 1.03357 \\
    82811 & 1.02747 & 205031 & 1.0169 & 316031 & 1.05385 & 440441 & 1.15852 \\
    87011 & 1.07857 & 205661 & 1.05097 & 322631 & 1.07177 & 448631 & 1.10491 \\
    98561 & 1.0694 & 206081 & 1.00692 & 325781 & 1.05864 & 454721 & 1.00694 \\
    101501 & 1.00679 & 219311 & 1.05694 & 328511 & 1.05523 & 464171 & 1.00607 \\
    101531 & 1.00714 & 222041 & 1.02361 & 330821 & 1.04042 & 464381 & 1.01407 \\
    108461 & 1.00871 & 225611 & 1.0726 & 341321 & 1.02666 & 465011 & 1.06779 \\
    117041 & 1.12882 & 225941 & 1.00577 & 345731 & 1.04732 & 470471 & 1.1837 \\
  \end{tabular}
  \caption{The ratio $\frac{\phi(p+1)}{\phi(p-1)}$ for the first $100$
    twin-prime pairs with $p\geq5$ for which \eqref{eq:Dominant}
    fails.  The initial numerical evidence suggests that
    $\frac{\phi(p+1)}{\phi(p-1)}$ is bounded above as $p,p+2$ runs
    over the twin primes.  Dickson's conjecture (and more extensive
    computation) suggest otherwise (Theorem~\ref{Theorem:Primitive}).}
  \label{Table:First100}
\end{table}

A glance at the numerical evidence suggests that
$\frac{\phi(p+1)}{\phi(p-1)}$ is bounded as $p, \, p+2$ range over the
twin primes; see Table~\ref{Table:First100}.  Our first theorem, whose
proof is in Section~\ref{Section:Primitive}, demonstrates that this is far
from the truth.

\begin{theorem}\label{Theorem:Primitive}
  Dickson's conjecture implies that
  \begin{equation*}
    \text{$\left\{ \frac{\phi(p+1)}{\phi(p-1)} \,:\, \text{$p$, $p+2$ are prime} \right\}$
      is dense in $[0,\infty)$}.
  \end{equation*}
\end{theorem}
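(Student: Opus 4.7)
Given $\alpha > 0$ and $\epsilon > 0$, the plan is to exhibit a twin prime pair $p, p+2$ with $|\phi(p+1)/\phi(p-1) - \alpha| < \epsilon$, by forcing $p - 1$ and $p+1$ to factor as a prescribed product of small primes times a single large prime, so that the totient ratio is both computable and tunable. Since $\phi(n)/n = \prod_{q \mid n}(1 - 1/q)$, Mertens' theorem together with $(1-1/q) \to 1$ implies that the collection of finite products $\prod_{q \in S}(1-1/q)$ is dense in $(0, 1]$, so
\[\left\{\frac{\prod_{q \in S_2}(1-1/q)}{\prod_{q \in S_1}(1-1/q)} : S_1, S_2 \text{ finite disjoint sets of odd primes}, \ 3 \in S_2 \right\}\]
is dense in $(0, \infty)$. (The requirement $3 \in S_2$ reflects the fact that $3 \mid p+1$ for any twin prime pair with $p \geq 5$.) Fix such $S_1, S_2$ with $\beta_0 := \prod_{q \in S_2}(1-1/q)/\prod_{q \in S_1}(1-1/q)$ within $\epsilon/2$ of $\alpha$, and set $Q_i = \prod_{q \in S_i} q$.

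The goal is to produce $p$ with $p - 1 = 4 Q_1 u'$ and $p + 1 = 2 Q_2 v$, where $u', v$ are large primes distinct from the members of $S_1 \cup S_2$. The identity $(p+1) - (p-1) = 2$ forces $Q_2 v - 2 Q_1 u' = 1$, whose integer solutions are $(u', v) = (u_0' + Q_2 t, v_0 + 2 Q_1 t)$ for a particular solution $(u_0', v_0)$ and $t \in \Z$. Substituting yields four linear forms in $t$:
\begin{align*}
L_1(t) &= 4 Q_1 Q_2 t + (4 Q_1 u_0' + 1), & L_3(t) &= Q_2 t + u_0', \\
L_2(t) &= L_1(t) + 2, & L_4(t) &= 2 Q_1 t + v_0,
\end{align*}
representing $p$, $p+2$, $u'$, and $v$ respectively. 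I apply Dickson's conjecture to $L_1, \ldots, L_4$ to obtain infinitely many $t$ at which all four values are simultaneously prime. For large such $t$, $u', v \notin S_1 \cup S_2$, the prime divisors of $p-1$ are exactly $\{2\} \cup S_1 \cup \{u'\}$ and those of $p+1$ are exactly $\{2\} \cup S_2 \cup \{v\}$, and hence
\[\frac{\phi(p+1)}{\phi(p-1)} \,=\, \frac{p+1}{p-1} \cdot \frac{\prod_{q \in S_2}(1-1/q)\,(1 - 1/v)}{\prod_{q \in S_1}(1-1/q)\,(1 - 1/u')} \,\longrightarrow\, \beta_0 \quad (t \to \infty),\]
placing the ratio within $\epsilon$ of $\alpha$ for sufficiently large $t$ and yielding the claimed density.

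The principal technical hurdle is verifying Dickson's local non-obstruction hypothesis: for every prime $\ell$, some residue class of $t$ must render $L_1(t), \ldots, L_4(t)$ simultaneously coprime to $\ell$. At $\ell = 2$, the relation $Q_2 v_0 = 1 + 2 Q_1 u_0'$ (with $Q_2$ odd) forces $v_0$ to be odd, so $L_1, L_2, L_4$ are identically odd while $L_3$ is odd for $t \not\equiv u_0' \pmod 2$. For odd $\ell \mid Q_1 Q_2$ (which covers $\ell = 3$, since $3 \in S_2$), the Bezout relation reduces $L_1, L_2$ and whichever of $L_3, L_4$ has leading coefficient divisible by $\ell$ to explicit nonzero constants modulo $\ell$, while the remaining form varies through every residue. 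For odd $\ell \geq 5$ with $\ell \nmid Q_1 Q_2$, each $L_i$ vanishes modulo $\ell$ on only one residue class of $t$, leaving at least $\ell - 4 \geq 1$ admissible classes. Together these cases cover every prime and complete the proof.
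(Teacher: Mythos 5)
Your proposal is correct and follows essentially the same strategy as the paper: approximate the target ratio by totient quotients of squarefree products of small primes (via Mertens' theorem), force $p-1=4Q_1u'$ and $p+1=2Q_2v$ with $u',v$ large primes, and apply Dickson's conjecture to the resulting four linear forms after checking local admissibility. The only difference is cosmetic---you parametrize the solutions of $Q_2v-2Q_1u'=1$ by B\'ezout, whereas the paper builds the common progression by the Chinese remainder theorem modulo $8$, $a^2$, $b^2$.
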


Before proceeding, we require a few words about Dickson's conjecture.
The assertion that there are infinitely many twin primes is the twin
prime conjecture, which remains unresolved despite significant recent
work \cite{Maynard, Polymath, Polymath2, Zhang}.  Thus, some unproved
conjecture must be assumed to say anything nontrivial about the
large-scale behavior of the twin primes.  Dickson's conjecture is
among the weakest general assertions that implies the twin prime
conjecture \cite{Dickson, 1CRTA, Ribenboim}.

\medskip\noindent\textbf{Dickson's Conjecture.}  \emph{If
  $f_1,f_2,\ldots,f_k \in \Z[t]$ are linear polynomials with positive
  leading coefficients and $f=f_1f_2\cdots f_k$ does not vanish
  identically modulo any prime, then $f_1(t), f_2(t),\ldots,f_k(t)$
  are simultaneously prime infinitely often.}  \medskip

The twin prime conjecture is the special case $f_1(t) = t$ and
$f_2(t) = t+2$.  Dickson's conjecture is weaker than the Bateman--Horn
conjecture, which concerns polynomials of arbitrary degree and makes
asymptotic predictions \cite{Bateman,Bateman2,1CRTA}.  More extensive
computations suggest the truth of Theorem~\ref{Theorem:Primitive}; see
Table~\ref{Table:Leaders}.

\begin{table}\footnotesize
  \begin{tabular}{cc|cc}
    $p$ & $\frac{\phi(p+1)}{\phi(p-1)}$ & $p$ & $\frac{\phi(p+1)}{\phi(p-1)}$ \\[3pt]
    \midrule
    2381 & 1.03125 & 17497481 & 1.27427 \\
    3851 & 1.06 & 69989921 & 1.27484 \\
    20021 & 1.11806 & 78278201 & 1.28693 \\
    50051 & 1.12 & 183953771 & 1.30984 \\
    52361 & 1.13594 & 242662421 & 1.32797 \\
    424271 & 1.16905 & 468818351 & 1.34577 \\
    470471 & 1.1837 & 2156564411 & 1.37262 \\
    602141 & 1.18793 & 24912037151 &1.37901\\
    2302301 & 1.2058 & 43874931101 &1.37949\\
    6806801 & 1.23097 & 73769375681 &1.39837 \\
    16926911 & 1.23678 & 131104243271 &1.42545
  \end{tabular}
  \caption{Running leaders among twin primes $p,p+2$ with $p\geq 5$
    for which \eqref{eq:Dominant} fails.  Theorem~\ref{Theorem:Primitive}
    suggests that $\frac{\phi(p+1)}{\phi(p-1)}$ can be arbitrarily
    large.}
  \label{Table:Leaders}
\end{table}

Totient quotients have a long and storied history \cite[Ch.~1]{Sandor}.
Schinzel established a curious result in 1954 \cite{Schinzel}, when he
showed that
\begin{equation}\label{eq:Schinzel}
  \text{$\left\{ \frac{\phi(n+1)}{\phi(n)} :  n =1,2,\ldots \right\}$ is dense in $[0,\infty)$}.
\end{equation}
This inspired later research by Schinzel, Sierpi\'{n}ski, Erd\H{o}s,
and others \cite{Schinzel2,Schinzel3, SS,SW,Erdos,Erdos2}.

The prime analogue of \eqref{eq:Schinzel} is false since
$\limsup_{p\to\infty} \frac{\phi(p+1)}{\phi(p)} \leq \frac{1}{2}$
because $p+1$ is even when $p$ is odd.  Taking this into account, we
prove in Section~\ref{Section:SchinzelPrime} that the following modified
analogue of Schinzel's theorem holds unconditionally.  The main
ingredient is a generalization of Chen's theorem
\cite[Thm.~25.11]{Friedlander}.

\begin{theorem}\label{Theorem:SchinzelPrime}
  (Unconditional)
  \[\text{$\displaystyle \left\{ \frac{\phi(p+1)}{\phi(p)} \,:\, \text{$p$
      prime} \right\}$ is dense in $\left[0,\tfrac{1}{2}\right]$}.\]
\end{theorem}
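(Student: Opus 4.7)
The ceiling $\tfrac{1}{2}$ reflects the fact that $p+1$ is even for every odd prime $p$, giving $\phi(p+1)/\phi(p) \leq (p+1)/(2(p-1))$. To approach any $\alpha \in [0, \tfrac{1}{2}]$, the plan is to prescribe the small prime factorization of $p+1$: fix an odd $N \geq 1$ and seek primes of the form $p = 2Nm - 1$ with $m$ large and with every prime factor of $m$ exceeding every prime factor of $2N$. Then $\gcd(m, 2N) = 1$ and $\phi(2N) = \phi(N)$, so
\begin{equation*}
  \frac{\phi(p+1)}{\phi(p)} \;=\; \frac{\phi(N)\phi(m)}{p - 1}
  \;=\; \frac{\phi(N)}{2N} \cdot \frac{\phi(m)}{m} \cdot \frac{p+1}{p-1}.
\end{equation*}
As $p \to \infty$ the last factor tends to $1$, and if additionally $\phi(m)/m \to 1$ along the sequence, the ratio approaches $\phi(N)/(2N)$.

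I would next verify that $\{\phi(N)/(2N) : N\ \text{odd}\}$ is dense in $[0, \tfrac{1}{2}]$. Writing $-\log(\phi(N)/N) = \sum_{q \mid N} a_q$ for $N$ odd and squarefree, with $a_q = -\log(1 - 1/q) \sim 1/q$, this reduces to the standard fact that the subset sums of a positive sequence whose terms tend to zero and whose total diverges are dense in $[0, \infty)$; the reciprocals of the odd primes form exactly such a sequence. This density, together with the closure of $[0,\tfrac{1}{2}]$, is what lets any target be reached.

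The remaining input is the generalization of Chen's theorem cited above, applied to the linear form $2Nm - 1$: it supplies infinitely many $m$ for which $2Nm - 1$ is prime and $m$ is a $P_2$ whose prime factors all exceed $m^{\theta}$ for some absolute $\theta > 0$. For such $m$ sufficiently large, the prime factors of $m$ automatically exceed those of $2N$, and $\phi(m)/m \geq (1 - m^{-\theta})^2 \to 1$. Given $\alpha$ and $\epsilon > 0$, I would choose odd $N$ with $|\phi(N)/(2N) - \alpha| < \epsilon/2$ and then extract a sufficiently large prime $p = 2Nm - 1$ from the Chen-type sequence to force $|\phi(p+1)/\phi(p) - \alpha| < \epsilon$. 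The main obstacle is the quantitative nature of this Chen-type input: one needs a $P_2$ whose prime factors are of polynomial size, rather than merely the bare $P_2$ statement, so that $\phi(m)/m$ can be driven to $1$. Once that sieve input is in hand, the rest is bookkeeping.
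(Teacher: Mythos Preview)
Your proposal is correct and follows essentially the same approach as the paper: write $p+1 = 2N\cdot m$ (the paper uses the notation $Q\cdot S$ with $Q$ even), arrange $\phi(2N)/(2N)$ to be close to the target, and invoke the Friedlander--Iwaniec generalization of Chen's theorem so that $m$ is a $P_2$ with prime factors of size $m^{\theta}$, forcing $\phi(m)/m\to 1$. The only cosmetic difference is that the paper produces the density of $\phi(Q)/Q$ via an explicit Mertens-type construction $Q=2\prod_{e^{2\xi x}<q\le e^x}q$ rather than your subset-sum argument, and varies $Q$ along with $S$ instead of fixing $N$ first.
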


The corresponding twin-prime analogue of Schinzel's theorem
\eqref{eq:Schinzel} is the following result, whose proof is in
Section~\ref{Section:Schinzel3}.

\begin{theorem}\label{Theorem:Schinzel3}
  Dickson's conjecture implies that
  \[
    \text{$\left\{ \frac{\phi(p+1)}{\phi(p)} \,:\, \text{$p$, $p+2$
          prime} \right\}$ is dense in $\left[0,\tfrac{1}{3}\right]$}.
  \]
\end{theorem}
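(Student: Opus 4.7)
The starting observation is that any twin prime pair $p,p+2$ with $p\geq 5$ has $p\equiv -1\pmod{6}$, hence $6\mid p+1$, so $\phi(p+1)\leq (p+1)/3$ and therefore
\[
\frac{\phi(p+1)}{\phi(p)} \,=\, \frac{\phi(p+1)}{p-1} \,\leq\, \frac{p+1}{3(p-1)} \,\longrightarrow\, \frac{1}{3}
\]
as $p\to\infty$; this explains why $1/3$ is the relevant upper endpoint. My plan, for a target $\alpha\in[0,1/3]$ and tolerance $\epsilon>0$, is to set $\beta:=3\alpha\in[0,1]$, approximate $\beta$ by a product of factors $(1-1/q_i)$ over a finite set of primes $5\leq q_1<\cdots<q_k$, and then use Dickson's conjecture to force the existence of twin primes $p,p+2$ in which $p+1 = 6\, q_1\cdots q_k\, t$ for some large auxiliary prime $t$; then $\phi(p+1)/(p+1) = \tfrac{1}{3}\prod_i(1-1/q_i)(1-1/t)$ will be close to $\alpha$ after accounting for the cosmetic factor $(p+1)/(p-1)$.

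The first ingredient is a Mertens-style approximation: the set $\bigl\{\prod_{q\in S}(1-1/q):S\subset\{q\text{ prime}\,:\,q\geq Q\}\text{ finite}\bigr\}$ is dense in $[0,1]$ for every fixed $Q\geq 5$. Indeed, since $\prod_{q\leq x}(1-1/q)\to 0$ by Mertens, I will start from the empty product $1$ and greedily adjoin primes $\geq Q$ in increasing order, stopping just before the running product would drop below $\beta$; the maximum single-step decrement is bounded by $1/Q$, so choosing $Q>10/\epsilon$ guarantees $\lvert\prod_i(1-1/q_i)-\beta\rvert<\epsilon/10$. The boundary cases are handled separately: $\beta=1$ by the empty product, and $\beta=0$ by choosing enough primes to drive the product below $\epsilon$.

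With $N:=6\,q_1\cdots q_k$ in hand, the second ingredient is Dickson's conjecture applied to the three linear forms $f_1(s)=s$, $f_2(s)=Ns-1$, $f_3(s)=Ns+1$. Verifying the joint non-vanishing condition is the technical crux and the main obstacle; I will dispatch it prime by prime. Modulo $2$, any odd $s$ works, since $Ns\pm 1$ are then odd; modulo $3$ and modulo each $q_i$ (all of which divide $N$) one has $f_2\equiv -1$ and $f_3\equiv +1$, so any $s\not\equiv 0$ is admissible; modulo any remaining prime $q$ (necessarily $q\geq 5$ and $q\nmid N$) the three bad residues $0,\,N^{-1},\,-N^{-1}$ are distinct, leaving at least $q-3\geq 2$ admissible residues. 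Dickson therefore furnishes arbitrarily large primes $t$ for which $p:=Nt-1$ and $p+2=Nt+1$ are simultaneously prime; taking $t>q_k$ ensures that $p+1 = 2\cdot 3\cdot q_1\cdots q_k\cdot t$ is the prime factorization into distinct primes, and
\[
\frac{\phi(p+1)}{\phi(p)} \,=\, \frac{p+1}{p-1}\cdot \frac{1}{3}\prod_{i=1}^{k}\Big(1-\frac{1}{q_i}\Big)\Big(1-\frac{1}{t}\Big).
\]
Since $(p+1)/(p-1)$ and $(1-1/t)$ both approach $1$ as $t\to\infty$, choosing $t$ sufficiently large makes the right-hand side lie within $\epsilon$ of $\beta/3=\alpha$, completing the density argument. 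The Mertens approximation and the final choice of $t$ are routine; the only real labor is the case analysis for the Dickson non-vanishing hypothesis.
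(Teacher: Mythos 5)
Your proposal is correct and follows essentially the same route as the paper: approximate $3\alpha$ by $\phi(Q')/Q'$ for a squarefree $Q'$ coprime to $6$, set $N=6Q'$, and apply Dickson's conjecture to $s$, $Ns-1$, $Ns+1$. The only cosmetic differences are that you build the approximating product greedily rather than via a Mertens-interval product (the paper's Lemma~\ref{Lemma:Phi}), and you verify the non-vanishing hypothesis by counting bad residues rather than by the paper's degree argument; both variants are sound.
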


Our proofs are transparent enough to permit the construction of
striking numerical examples that cannot be obtained easily through
brute force alone.  For example, the twin primes
\begin{quote}\footnotesize
  $ p=  \seqsplit{7642856398602124688629749934198565871312540429046303895770916192951906734870659150561934966844646027084062815031558255187497845592422263591099165956612523533130293687015551343007872097253311773591110917528188316414599672498791126998631571669927514554696197257278634275128724279128189209746271781127728971190835766821004705695431931600462680599536653440211216644627374103340174280330773185320397138921513211749572729188424740776331734373496916520151521002589283157543831117944688070785572501766739882905425624193096685596320850078698276901411040453944941282023066485167044354346570749231118823789564541354721325254528236757389662894019979784752005836753790957246918204110303873795632580172005942425384506696512023749191214954151888293569682377697953981506694554048884224168645065101150446101000833424954728109752271343988672243937380333654916021273646288933510923983231864735583985112984704333900355465514181712760200823033200490940934697445099785257641185993998522771636033743449797832811518401162205004091941408725193787590452993251183485650458246694103753512762453446893673884455566405601755508021}
  $
\end{quote}
and $p+2$ yield a ratio $\frac{\phi(p+1)}{\phi(p-1)} = 3.11615\ldots$,
which is far larger than those displayed in Table~\ref{Table:Leaders}.
As another example, consider
$\frac{\pi}{10} = 0.31415\ldots \in \left[0,\frac{1}{3}\right]$.  The
method of proof of Theorem~\ref{Theorem:Schinzel3} (with slight
modifications) and a computer search yields the the twin prime
pair $p = 7726274821004474852086566160138278575763701613133157$ and
$p+2$, which satisfies (the underlined digits agree with those of
$\frac{\pi}{10}$)
\begin{equation*}
  \frac{\phi(p+1)}{\phi(p)} = \underline{0.314159265358979}21341\ldots.
\end{equation*}

Theorem~\ref{Theorem:Primitive}, Theorem~\ref{Theorem:SchinzelPrime}, and
Theorem~\ref{Theorem:Schinzel3} each have analogues for the sum-of-divisors
function $\sigma(n) = \sum_{d \mid n} d$.  We collect these results in
the following theorem, whose proof is in Section~\ref{Section:Sigma}.

\begin{theorem}
  \label{Theorem:Sigma}
  \mbox{}
  \begin{enumerate}
  \item \label{Part:SigmaPrimitive}
    Dickson's conjecture implies that
    \[
      \text{$\left\{ \frac{\sigma(p+1)}{\sigma(p-1)} \,:\,
          \text{$p$, $p+2$ prime} \right\}$ is dense in
        $[0,\infty)$}.
    \]
  \item \label{Part:SigmaSchinzelPrime}
    (Unconditional)
    \[
      \text{$\displaystyle\left\{ \frac{\sigma(p+1)}{\sigma(p)} :
          \text{$p$ prime} \right\}$ is dense in
        $\left[\tfrac{3}{2},\infty\right)$}.
    \]
  \item \label{Part:SigmaSchinzel3}
    Dickson's conjecture implies that
    \[
      \text{$\left\{ \frac{\sigma(p+1)}{\sigma(p)} \,:\,
          \text{$p$, $p+2$ prime} \right\}$ is dense in
        $[2,\infty)$}.
    \]
  \end{enumerate}
\end{theorem}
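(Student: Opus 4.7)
The plan is to prove all three parts by mirroring the proofs of Theorems~\ref{Theorem:Primitive}, \ref{Theorem:SchinzelPrime}, and \ref{Theorem:Schinzel3}, with $\phi$ replaced by $\sigma$. The common arithmetic ingredient is the density lemma: for any cofinite set $P$ of odd primes, quotients $\prod_{r \in R}(1+1/r)/\prod_{q \in Q}(1+1/q)$ over disjoint finite $R, Q \subseteq P$ are dense in $(0,\infty)$; this follows from $\sum_{p \in P} 1/p = \infty$ together with $1 + 1/p \to 1$, and is the $\sigma$-analogue of the density lemma underlying Schinzel's original paper. In each part the auxiliary small primes are chosen so that the asymptotic value of $\sigma(\cdot)/(\cdot)$ approximates $\alpha$, and the required primes are then produced via Dickson or a Chen-type sieve.

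For part~(\ref{Part:SigmaSchinzelPrime}) (unconditional), fix $\alpha \geq 3/2$, choose odd primes $r_1,\ldots,r_l$ with $\tfrac{3}{2}\prod_i(1+1/r_i)$ close to $\alpha$, and set $R = r_1\cdots r_l$. Applying a suitable Chen-type theorem (cf.~\cite[Thm.~25.11]{Friedlander}) to the progression $p \equiv -1 \pmod{2R}$ produces infinitely many primes $p$ for which $(p+1)/(2R)$ is a $P_2$ with both prime factors exceeding $p^{\delta}$ for a fixed $\delta > 0$; for any such $p$, $\sigma(p+1)/(p+1) = \tfrac{3}{2}\prod_i(1+1/r_i)(1+o(1))$. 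For part~(\ref{Part:SigmaSchinzel3}), fix $\alpha \geq 2$, choose odd primes $r_1,\ldots,r_l$ distinct from $3$ with $2\prod_i(1+1/r_i) \approx \alpha$, set $R = 3\,r_1\cdots r_l$, and apply Dickson to the triple of linear forms $A$, $2RA-1$, $2RA+1$; admissibility is immediate since the latter two reduce to $\mp 1$ modulo every prime dividing $2R$. Each Dickson-prime $A$ yields twin primes $p = 2RA - 1$, $p + 2 = 2RA + 1$ with $p+1 = 2RA$ squarefree of known factorization, so
\[
\frac{\sigma(p+1)}{\sigma(p)} \,=\, 2\prod_i\!\left(1+\frac{1}{r_i}\right)\!\left(1+\frac{1}{A}\right) \longrightarrow \alpha
\]
as $A \to \infty$.

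For part~(\ref{Part:SigmaPrimitive}), fix $\alpha > 0$ and choose disjoint finite sets $\{r_i\}, \{q_j\}$ of odd primes $\geq 5$ with $\tfrac{14}{9}\prod_i(1+1/r_i)/\prod_j(1+1/q_j) \approx \alpha$ (the boundary $\alpha = 0$ is the limit as $\prod_j(1+1/q_j) \to \infty$). Set $R = 3\prod_i r_i$ and $Q = \prod_j q_j$, and seek twin primes $p, p+2$ with $p + 1 = 4RA$ and $p - 1 = 2QB$ for primes $A, B$. The compatibility $(p+1) - (p-1) = 2$ reduces to $2RA - QB = 1$, which parametrizes as $A = A_0 + Qt$, $B = B_0 + 2Rt$, where $A_0 = (2R)^{-1} \bmod Q$ and $B_0 = (2RA_0 - 1)/Q$. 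Apply Dickson to the four linear forms
\begin{equation*}
A_0 + Qt,\quad B_0 + 2Rt,\quad 4RA_0 - 1 + 4RQt,\quad 4RA_0 + 1 + 4RQt ;
\end{equation*}
admissibility reduces to the routine check that the last two forms are $\equiv \mp 1$ modulo $2$, $3$, and each $r_i$ or $q_j$, while the first two are non-degenerate modulo every relevant prime by the defining congruences for $A_0, B_0$. Using $\sigma(p+1) = 28\prod_i(r_i+1)(A+1)$, $\sigma(p-1) = 3\prod_j(q_j+1)(B+1)$, and $A/B \to Q/(6\prod_i r_i)$ as $t \to \infty$, a direct computation then yields
\[
\frac{\sigma(p+1)}{\sigma(p-1)} \longrightarrow \frac{14\prod_i(1+1/r_i)}{9\prod_j(1+1/q_j)} \approx \alpha .
\]

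The principal obstacle I anticipate is the Dickson admissibility verification in part~(\ref{Part:SigmaPrimitive}): the constraint $2RA - QB = 1$ forces $2RA$ and $QB$ to have opposite parities, so the factor of $4$ in $p+1 = 4RA$ (rather than $2$) is essential to allow $A$ and $B$ to be simultaneously odd and hence simultaneously odd primes; any attempt to work with the more natural form $p+1 = 2RA$ is obstructed modulo $2$. A secondary concern is identifying the precise Chen-type theorem needed for part~(\ref{Part:SigmaSchinzelPrime}); one must ensure that \cite[Thm.~25.11]{Friedlander} (or a standard adaptation) accommodates arithmetic progressions with prescribed modulus $2R$, but this should parallel the input already used for Theorem~\ref{Theorem:SchinzelPrime}.
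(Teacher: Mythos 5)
Your proposal is correct and follows essentially the same route as the paper: a Mertens-type density lemma for $\sigma(n)/n$ over squarefree $n$ avoiding prescribed primes, Dickson's conjecture applied to four (resp.\ three) linear forms that simultaneously produce the twin primes and the large prime cofactors of $p\pm 1$, and the Friedlander--Iwaniec form of Chen's theorem for the unconditional part. The only differences are cosmetic: you put the factor $4$ in $p+1$ rather than $p-1$ (hence your constant $\tfrac{14}{9}$ versus the paper's $\tfrac{6}{7}$) and you parametrize the four forms via the B\'ezout identity $2RA-QB=1$ instead of the Chinese remainder theorem; the one small slip is that modulo the $q_j$ your last two forms are $\equiv 1$ and $3$, not $\mp 1$, but they are still nonzero since $q_j\geq 5$, so admissibility is unaffected.
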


%%%%%%%%% VVVVVVVVVVVVVVVVVVVVVVVVVVVVVV%%%%%%%%%%
\section{Proof of
  Theorem~\ref{Theorem:Primitive}}\label{Section:Primitive}

\subsection*{A folk lemma}
Mertens' third theorem asserts that
\begin{equation}\label{eq:Mertens}
  \prod_{q\leq x}\left(1- \frac{1}{q}\right) \,\sim\, \frac{e^{-\gamma}}{\log x},
\end{equation}
in which $\gamma$ is the Euler--Mascheroni constant \cite{Mertens,
  Hardy}.  A more elementary proof of the following lemma can be based
on \cite[Prop.~8.8]{DeKoninck} instead.

\begin{lemma}\label{Lemma:Phi}
  Let $\mathcal{P}$ denote a finite set of primes.  Then
  \begin{equation*}
    \left\{ \frac{\phi(n)}{n} : \text{$n$ squarefree, $p \nmid n$ for
        all $p \in \mathcal{P}$}\right\}
  \text{ is dense in $[0,1]$}.
  \end{equation*}
\end{lemma}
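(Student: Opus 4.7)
The plan is to exploit the multiplicative structure of $\phi(n)/n$ for squarefree $n$: since $\phi(n)/n = \prod_{q \mid n}(1 - 1/q)$, building $n$ one prime at a time decreases the ratio by a controllable multiplicative factor. The main ingredient (supplied by Mertens' theorem \eqref{eq:Mertens}) is that this factor can be made arbitrarily close to $1$ while still allowing the ratio to descend to $0$.

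Fix $\alpha \in [0,1]$ and $\epsilon > 0$. The cases $\alpha = 1$ (take $n = 1$) and $\alpha = 0$ are immediate from Mertens: for the latter, $\prod_{q \leq x,\, q \notin \mathcal{P}}(1 - 1/q)$ differs from $\prod_{q \leq x}(1 - 1/q)$ by a bounded nonzero factor, hence still tends to $0$. Assume now $\alpha \in (0,1)$. Choose a prime $Q$ so large that $Q > \max \mathcal{P}$ and $1/Q < \epsilon/2$.

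The construction is greedy. List the primes $q_1 < q_2 < \cdots$ in $[Q,\infty)$ (all automatically outside $\mathcal{P}$), and set $n_0 = 1$, $n_k = q_1 q_2 \cdots q_k$, so that $v_k := \phi(n_k)/n_k = \prod_{i=1}^k (1 - 1/q_i)$. By Mertens, $v_k \to 0$ as $k \to \infty$, so there is a smallest $k$ with $v_k \leq \alpha$. Then $v_{k-1} > \alpha$ and
\[
  v_k = v_{k-1}\Bigl(1 - \tfrac{1}{q_k}\Bigr) > \alpha \Bigl(1 - \tfrac{1}{Q}\Bigr) > \alpha - \tfrac{1}{Q},
\]
so $\alpha - \epsilon < v_k \leq \alpha$, and $n_k$ is squarefree and coprime to every prime in $\mathcal{P}$.

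There is no serious obstacle here; the only point requiring care is ensuring that the greedy procedure both terminates (which rests on Mertens, the substantive input) and does not overshoot (which rests on choosing $Q$ large, giving each step a multiplicative factor arbitrarily close to $1$). The alternative route suggested by the authors, via \cite[Prop.~8.8]{DeKoninck}, would replace Mertens by a more elementary estimate for $\sum_{p\leq x} 1/p$, but the structure of the greedy argument is identical.
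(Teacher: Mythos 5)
Your proof is correct, but it takes a genuinely different route from the paper's. The paper fixes $\xi\in(0,1]$ and takes $n_t=\prod_{e^{\xi t}\leq q< e^t}q$, then computes $\phi(n_t)/n_t\sim\xi$ directly from Mertens' third theorem applied to the two endpoints of the window; the whole proof is a one-line asymptotic evaluation. You instead run a greedy construction: adjoin primes $q\geq Q$ one at a time until the ratio first drops below $\alpha$, and observe that the final step perturbs the ratio by a factor $1-1/q_k\geq 1-1/Q$, so there is no overshoot. Your argument is more elementary in that it needs only the divergence of $\prod_q(1-1/q)$ to $0$ (equivalently $\sum_q 1/q=\infty$), not the Mertens asymptotic with its $1/\log x$ rate --- which is precisely the "more elementary proof" the authors allude to via \cite[Prop.~8.8]{DeKoninck}. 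Amusingly, your construction is exactly the algorithm the authors describe in their "Computational differences" section as the method they actually implemented for the numerical examples, with the same termination and no-overshoot justification. What the paper's version buys is brevity and an explicit family $n_t$ with a clean limit; what yours buys is weaker hypotheses and a constructive procedure. All the details check out: termination follows from $v_k\to0$ with $v_0=1>\alpha$, the bound $v_k>\alpha(1-1/Q)\geq\alpha-1/Q>\alpha-\epsilon$ is right since $\alpha\leq1$, and the endpoint cases $\alpha=0,1$ are handled correctly.
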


\begin{proof}
    Let $\xi \in (0,1]$ and
  $n_t = \prod_{e^{\xi t} \leq q < e^t } q$, in which
  $t > \frac{1}{\xi} \log \max \mathcal{P}$.  Then $n_t$ is
  squarefree, $p \nmid n_t$ for all $p \in \mathcal{P}$, and
  \begin{equation*}
    \frac{\phi(n_t)}{n_t} 
    = \prod_{e^{\xi t} \leq q < e^t}\left(1 - \frac{1}{q}\right)
    \sim \frac{e^{-\gamma} / \log (e^t)}{e^{-\gamma} / \log (e^{\xi t})}
    \sim \frac{\log (e^{\xi t})}{\log (e^t)} = \xi
  \end{equation*}
  as $t \to \infty$.
\end{proof}

\subsection*{Initial setup}
It suffices to show that Dickson's conjecture implies that for each
fixed $\xi \in (0,\infty)$ and $0<\delta < 1$, there is a twin-prime
pair $p,p+2$ such that
$\frac{\phi(p+1)}{\phi(p-1)} \in
\big(\xi(1-\delta),\xi(1+\delta)\big)$.

Let $0 < x \leq \min\left\{\frac{2}{3},\xi\right\}$.
Lemma~\ref{Lemma:Phi} provides a squarefree $b$ such that
\begin{equation*}
  (b,6) = 1 \quad \text{and} \quad
  \frac{\phi(b)}{b}\in \left(\frac{x}{\xi}\left(\frac{1}{1+\delta}\right),\frac{x}{\xi} \right).
\end{equation*}
A second appeal to Lemma~\ref{Lemma:Phi} yields a squarefree $a'$ such
that
\begin{equation*}
  (a',6b) = 1 \quad \text{and}\quad
  \frac{\phi(a')}{a'}  \in \left(\frac{3x}{2} \, (1- \delta),\frac{3x}{2}\right).
\end{equation*}
Our choice of $x$ ensures that the intervals specified are contained
in $(0,1)$.  Let $a = 3a'$ and observe that
\begin{equation*}
  \frac{\phi(a)}{a} = \frac{2}{3} \frac{\phi(a')}{a'} \in
  \big(x(1-\delta),x\big).
\end{equation*}
Consequently,
\begin{equation}\label{eq:Trick}
  \frac{\phi(a)}{a} \frac{b}{\phi(b)}
  \,\in\,  \big(\xi(1-\delta),\xi(1+\delta) \big).
\end{equation}

\subsection*{The polynomials}\label{Section:Polynomials}
Our strategy is to produce linear polynomials $f_1, f_2, \dots, f_4$
to which Dickson's conjecture can be applied, using $f_1, f_2$ to
produce twin primes $p, \, p+2$, and using $f_3, f_4$ to ensure
that $\frac{\phi(p+1)}{\phi(p-1)}$ falls in the desired interval.

Since $8,a^2,b^2$ are pairwise relatively prime, the Chinese remainder
theorem provides $c$ such that
\begin{align}
  c  &\equiv  5\pmod 8, \label{eq:pc58}\\
  c  &\equiv  a-1\pmod {a^2}, \quad\text{and} \label{eq:pca12}\\
  c  &\equiv  b+1\pmod {b^2}. \label{eq:pcb1}
\end{align}
Since $3 \mid a$, it follows from \eqref{eq:pca12} that
\begin{equation}\label{eq:pc23}
  c  \equiv  2 \pmod 3.
\end{equation}
Define
\begin{equation*}
  h(t)=24a^2b^2t + c
\end{equation*}
and let
\begin{align*}
  f_1(t) & =  h(t),\\
  f_2(t) & =  h(t)+2,\\
  f_3(t) &= \frac{h(t)-1}{4b} = \frac{24a^2b^2t + (c-1)}{4b} = 6a^2b t + \frac{c-1}{4b},\quad \text{and}\\
  f_4(t) &= \frac{h(t)+1}{2a} =  \frac{24a^2b^2t + (c+1)}{2a}  = 12ab^2 t + \frac{c+1}{2a},
\end{align*}
Clearly $f_1,f_2 \in \Z[t]$.  Observe that \eqref{eq:pc58} and
\eqref{eq:pcb1} ensure that $f_3$ has integral coefficients.
Similarly, \eqref{eq:pc58} and \eqref{eq:pca12} ensure that $f_4$ has
integral coefficients.  Thus, all four polynomials are in $\Z[t]$ and
have positive leading coefficients.

\subsection*{Nonvanishing product}
We claim that $f = f_1f_2f_3f_4$ does not vanish identically modulo
any prime.  Since
\begin{align*}
  f_1(t) &\equiv f_2(t) \equiv c \equiv 1 \pmod{2}, && \text{by \eqref{eq:pc58}},\\
  f_3(t) &\equiv \frac{c-1}{4b} \equiv 1 \pmod{2}, && \text{by $(b,2) = 1$ and \eqref{eq:pc58}},\\
  f_4(t) &\equiv \frac{c+1}{2a} \equiv  1 \pmod{2}, &&\text{by $(a,2) = 1$ and \eqref{eq:pc58}},
\end{align*}
it follows that $f$ does not vanish modulo $2$.  Similarly,
\begin{align*}
  f_1(t) &\equiv c \equiv 2 \pmod{3}, && \text{by \eqref{eq:pc23}},\\
  f_2(t) &\equiv c+2 \equiv 1 \pmod{3}, && \text{by \eqref{eq:pc23}},\\
  f_3(t) &\equiv \frac{c-1}{4b} \equiv b \not\equiv 0 \pmod{3}, && \text{by $(b,3)=1$ and \eqref{eq:pc23}},\\
  f_4(t) &\equiv \frac{c+1}{2a} \equiv 2 \pmod{3}, &&\text{by \eqref{eq:pca12}},
\end{align*}
so $f$ does not vanish modulo $3$.  The final statement perhaps
deserves a bit of explanation.  From \eqref{eq:pca12} we have
$c+1 \equiv a \pmod{a^2}$ and hence $\frac{c+1}{a} \equiv 1 \pmod{a}$.
Since $3 \mid a$, it follows that $\frac{c+1}{a} \equiv 1 \pmod{3}$
from which the desired statement follows.

For any prime $q\geq 5$ such that $q\nmid ab$, the polynomial $f$ has
degree four and hence cannot vanish identically modulo $q$.  Now
suppose that $q\geq 5$ is prime and $q \mid ab$.  Then
$h(t) \equiv c \pmod{q}$.  Since \eqref{eq:pca12} and \eqref{eq:pcb1}
ensure that
\begin{equation}\label{eq:pc11}
  c \equiv
  \begin{cases}
    -1 \pmod{q}& \text{if $q \mid a$},\\
    1 \pmod{q}& \text{if $q \mid b$},
  \end{cases}
\end{equation}
it follows that $f_1$ and $f_2$ do not vanish modulo $q$.  Similarly,
\begin{align*}
  f_3(t) &\equiv \frac{c-1}{4b} \equiv
           \begin{cases}
             -2^{-1}b^{-1} \pmod{q} & \text{if $q \mid a$ (by \eqref{eq:pc11})},\\
             4^{-1} \pmod{q} & \text{if $q \mid b$ (by
               \eqref{eq:pcb1})},
           \end{cases}
  \\
  f_4(t) &\equiv \frac{c+1}{2a} \equiv
           \begin{cases}
             2^{-1} \pmod{q} & \text{if $q \mid a$ (by \eqref{eq:pca12})},\\
             a^{-1} \pmod{q} & \text{if $q \mid b$ (by
               \eqref{eq:pc11})}.
           \end{cases}
\end{align*}
Thus, $f$ does not vanish identically modulo any prime.

\subsection*{Conclusion}
Dickson's conjecture provides infinitely many $T$ such that $f_1(T)$,
$f_2(T)$, $f_3(T)$, and $f_4(T)$ are prime.  For such $T$, the primes
\begin{equation*}
  p=f_1(T)\qquad \text{and}\qquad p+2=f_2(T)
\end{equation*}
satisfy
\begin{equation*}
  p+1 =  2a \, f_4(T)
  \qquad\text{and}\qquad
  p-1 = 4b \, f_3(T).
\end{equation*}
Consequently, \eqref{eq:Trick} ensures that
\begin{align*}
  \frac{\phi(p+1)}{\phi(p-1)}
  &=\frac{\phi(p+1)}{p+1}  \frac{p-1}{\phi(p-1)}  \frac{p+1}{p-1} \\
  & =  \frac{\phi(p+1)}{p+1}  \frac{p-1}{\phi(p-1)} \big(1+o(1)\big)\\
  & =   \frac{\phi(2a \, f_4(T))}{2a \, f_4(T)}
    \frac{4b \, f_3(T)}{\phi(4b \, f_3(T))} \big(1+o(1)\big)\\
  & =   \frac{\phi(2) \, \phi(a) \, \phi(f_4(T))}{2a \, f_4(T)}  \frac{4b \, f_3(T)}{\phi(4) \, \phi(b) \, \phi(f_3(T))} \big(1+o(1)\big)\\
  & =  \frac{1 \phi(a)}{2a}  \frac{f_4(T)-1}{f_4(T)}  \frac{4b}{2\phi(b)}  \frac{f_3(T)}{f_3(T)-1}  \big(1+o(1)\big)\\
  & =   \frac{\phi(a)}{a}\frac{b}{\phi(b)}  \big(1+o(1)\big)
\end{align*}
belongs to $\big(\xi(1-\delta),\xi(1+\delta)\big)$ for large $T$.
Here we have used the facts
$(2a,f_4(T)) = 1$ and $(4b,f_3(T)) = 1$, which  
follow from \eqref{eq:pc58} and \eqref{eq:pca12}, and from
\eqref{eq:pc58} and \eqref{eq:pcb1}, respectively.
\qed

%%%%%%%%%%%%%%%%%%%%%%%%%%%%%%%%%%%%%
\section{Proof of
  Theorem~\ref{Theorem:SchinzelPrime}}\label{Section:SchinzelPrime}

Chen's theorem asserts that every sufficiently large even number is
the sum of two primes, or a sum of a prime and a semiprime (a number
with precisely two prime factors) \cite{Chen1, Chen2}.  We require a
generalization of Chen's theorem to linear forms.  The version below
is due to Friedlander and Iwaniec \cite[Thm.~25.11]{Friedlander}.

\begin{theorem}[Chen, Friedlander--Iwaniec]\label{Theorem:Chen}
  Let $a,c \geq 1$ and $b\neq 0$ be pairwise coprime integers with
  $2 \mid abc$.  For $t$ sufficiently large (in terms of $abc$),
  \begin{equation*}
    \big\lvert  \{ p \leq t : ap+b = cs\} \big \rvert \, \geq \, \frac{W(abc)}{31c}  \frac{Bt}{(\log t)^2},
  \end{equation*}
  in which $s$ has at most two prime factors, each one larger than
  $t^{3/11}$, 
  \begin{equation*}
    W(d) = \prod_{\substack{p \mid d\\p>2}} \left(1 - \frac{1}{p-1}\right)^{-1}
    \qquad\text{and}\qquad
    B = 2\prod_{p\geq 3} \left(1 - \frac{1}{(p-1)^2} \right).
  \end{equation*}
\end{theorem}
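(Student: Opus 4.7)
The plan is to adapt Chen's weighted sieve, following the exposition of Friedlander-Iwaniec. Three essential ingredients are needed: a Bombieri-Vinogradov-type bound for primes in arithmetic progressions giving a level of distribution $D = t^{1/2-\epsilon}$ uniform in the linear form $ap+b$; the Jurkat-Richert linear sieve applied to the sequence $\mathcal{A} = \{ap+b : p \leq t \text{ prime},\, c \mid ap+b\}$; and Chen's switching (``roundabout'') argument to circumvent the parity obstruction that otherwise prevents a pure lower-bound sieve from isolating primes.

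First I would introduce Chen's weighted counting function
\[
\Omega(t) \;=\; S(\mathcal{A};\, t^{1/11}) \;-\; \tfrac{1}{2}\!\!\sum_{t^{1/11} \leq q < t^{1/3}}\!\! S(\mathcal{A}_q;\, t^{1/11}) \;-\; \tfrac{1}{2}\, R(t),
\]
where $S(\mathcal{A};z)$ counts elements $ap+b$ of $\mathcal{A}$ for which $(ap+b)/c$ is coprime to all primes less than $z$, where $\mathcal{A}_q$ is the subset on which $q \mid (ap+b)/c$, and where $R(t)$ corrects for configurations with three prime factors in $[t^{3/11},\, t^{1/3}]$. The weights are arranged so that every prime $p$ contributing positively to $\Omega(t)$ has $(ap+b)/c$ equal either to a prime or to a product of two primes each at least $t^{3/11}$; a lower bound on $\Omega(t)$ therefore yields the desired lower bound on the quantity in the theorem.

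Second I would evaluate each piece via the linear sieve. For squarefree $d$ coprime to $abc$ one has $|\mathcal{A}_d| = \pi(t)/\phi(cd) + r_d$, and the average of $|r_d|$ for $d \leq D$ is $O(t(\log t)^{-A})$ by Bombieri-Vinogradov. The singular series splits as a local product at primes $q \mid abc$ (producing $W(abc)$) times an Euler product over the remaining primes (converging to $B$); the linear-sieve lower bound with cutoff $z = t^{1/11}$ contributes the $(\log t)^{-2}$ decay; and the coefficient $1/c$ reflects the density of $\mathcal{A}$ in the residue class $0 \pmod{c}$. The subtracted middle sum overcounts precisely those $p$ for which $(ap+b)/c = q_1 q_2 q_3$ with $t^{3/11} \leq q_1 \leq q_2 \leq q_3$; Chen's switching re-expresses this overcount as a bilinear sieve problem on the switched sequence $\{a q_1 q_2\, n + b : n \leq t/(q_1 q_2)\}$, which can be bounded above by an upper-bound linear sieve. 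Combining the main term with the switched upper bound and numerically optimizing the sieve cutoffs yields the stated lower bound, with the explicit constant $1/31$ emerging from the combinatorial optimization.

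The main obstacle is threading uniformity in $a$, $b$, $c$ through every step: Bombieri-Vinogradov must hold uniformly for the linear form $ap+b$ up to modulus $t^{1/2-\epsilon}$, and the singular series must factor cleanly so that the local corrections at primes dividing $abc$ collapse to exactly $W(abc)/c$ with no extraneous loss. Aligning the $3/11$ cutoff with the $1/2$ level of distribution is the crucial numerical balancing act that forces a nonzero lower bound to appear at all; this is exactly where Chen's original insight entered and where the parity obstruction is threaded around by allowing both primes and semiprimes in $(ap+b)/c$.
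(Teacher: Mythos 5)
This statement is not proved in the paper at all: it is quoted verbatim from Friedlander and Iwaniec (\emph{Opera de Cribro}, Theorem 25.11) and used as a black box in Section 3, so there is no in-paper proof to compare against. Your outline does correctly identify the machinery behind the cited result --- Chen's weighted sieve with the switching trick to evade the parity barrier, the Jurkat--Richert linear sieve, and a Bombieri--Vinogradov input giving level of distribution $t^{1/2-\epsilon}$ for the sequence $\{ap+b : p\le t,\ c\mid ap+b\}$ --- and this is indeed the strategy of the proof in the cited source.

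However, what you have written is a roadmap rather than a proof, and the gap is exactly where the content of the theorem lives. The specific claims of the statement are quantitative: the constant $\tfrac{1}{31}$, the exponent $\tfrac{3}{11}$ on the prime factors of $s$, the precise local factor $W(abc)/c$ multiplying $B$, and uniformity in $a,b,c$ once $t$ is large in terms of $abc$. Your sketch asserts all of these (``the explicit constant $1/31$ emerging from the combinatorial optimization,'' ``the singular series must factor cleanly \dots with no extraneous loss'') without carrying out any of the computations that would produce them: you never evaluate the linear-sieve lower bound $f(s)$ at the chosen sieving level, never bound the switched bilinear sum, and never verify that the resulting numerical balance is positive --- which is the delicate point that makes Chen-type theorems nontrivial. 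There is also an internal inconsistency to repair: you sieve up to $z=t^{1/11}$ yet claim every surviving $p$ has $(ap+b)/c$ with all prime factors at least $t^{3/11}$, which does not follow from the weight you wrote down without an additional argument removing prime factors in $[t^{1/11},t^{3/11})$. For the purposes of this paper the correct move is simply to cite Friedlander--Iwaniec, as the authors do; a self-contained proof would require reproducing essentially all of their Chapter 25.
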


Let $\xi \in \left[0,\frac{1}{2}\right]$ and $\delta > 0$.  For
$x\geq \frac{\log 2}{2\xi}$, the integer $Q = Q(x)$ defined by
\begin{equation*}
  Q = 2 \prod_{\mathclap{e^{2\xi x} < q \leq e^x}} q
\end{equation*}
is divisible by $2$, but not $4$.  As $x \to \infty$, \eqref{eq:Phi}
and \eqref{eq:Mertens} imply
\[
  \frac{\phi(Q)}{Q} = \frac{1}{2} \prod_{e^{2\xi x} < q \leq e^x}
  \left(1 - \frac{1}{q} \right) \sim \frac{1}{2}
  \frac{e^{-\gamma}}{\log(e^x)} \frac{\log(e^{2\xi x})}{e^{-\gamma}} =
  \xi.
\]
For each $x$, apply Theorem~\ref{Theorem:Chen} with $a = b = 1$
and $c = Q$ to obtain an $S = S(x)$ with at most two prime factors,
both of which are greater than $\max\{Q,x\}$, such that
$p = p(x) = QS-1$ is prime.  Then
\begin{equation*}
  \lim_{x\to\infty} \frac{\phi(S)}{S} = \lim_{x\to\infty}  \prod_{q \mid S}\left(1 - \frac{1}{q}\right) = 1
\end{equation*}
and hence
\begin{align*}
  \frac{\phi(p+1)}{\phi(p)}
  = \frac{ \phi(QS) }{\phi(QS-1)}
  = \frac{ \phi(Q) \, \phi(S) }{ QS-2}
  = \frac{\phi(Q)}{Q}  \frac{\phi(S)}{S} \frac{S}{S-2/Q} \to \xi
\end{align*}
as $x\to \infty$.  This concludes the proof. \qed

%%%%%%%%%%%%%%%%%%%%%%%%%%%%%%%%%%%%%%%%
\section{Proof of Theorem~\ref{Theorem:Schinzel3}
}\label{Section:Schinzel3}

Fix $\xi \in \left(0,\frac{1}{3}\right)$ and let
$0 < \delta < \frac{1 - 3 \xi}{3 \xi}$.  Lemma~\ref{Lemma:Phi} yields
a squarefree $Q'$ such that
\begin{equation*}
  (Q',6) = 1 \qquad \text{and} \qquad
  \frac{\phi(Q')}{Q'} \in \big(3\xi(1-\delta),3\xi(1+\delta)\big).
\end{equation*}
Let $Q=6Q'$, and observe that
\[
  \frac{\phi(Q)}{Q} = \frac 1 3 \frac{\phi(Q')}{Q'}
  \in \big(\xi(1-\delta), \xi(1+\delta)\big).
\]
Define the polynomials
\begin{equation*}
  f_1(t) = t,\qquad
  f_2(t) = Qt-1,\quad \text{and} \quad
  f_3(t) = Qt+1.
\end{equation*}
If $q \geq 5$ and $q \nmid Q$, then $f= f_1f_2f_3$ has degree three
and cannot vanish identically modulo $q$.  If $q \mid Q$, then
$f(1) = Q^2 - 1 \equiv -1 \pmod{q}$ and hence $f$ does not vanish
identically modulo $q$.  In particular, $f$ does not vanish
identically modulo $2$ or $3$.  Thus, $f$ does not vanish identically
modulo any prime.

Dickson's conjecture provides infinitely many $T$ such that $f_1(T)$,
$f_2(T)$, and $f_3(T)$ are prime.  In particular, we may assume that
the prime $f_1(T) =T$ is greater than $Q$ so that $(Q,T) = 1$.  Then $p=QT-1$ and $p+2=QT+1$ are twin
primes and $p+1 = QT$.  Then
\begin{align*}
  \frac{\phi(p+1)}{\phi(p)} &= \frac{\phi(QT)}{\phi(QT-1)}\\
                            &= \frac{\phi(Q) \, \phi(T)}{\phi(QT-1)}\\
                            &= \frac{\phi(Q) (T-1)}{QT-2} \\
                            &= \frac{\phi(Q)}{Q} \frac{T-1}{T-2/Q} \\
                            &= \frac{\phi(Q)}{Q} \big(1 + o(1)\big)
\end{align*}
is in $\big(\xi(1-\delta),\xi(1+\delta)\big)$ for sufficiently large
$T$.  \qed

%%%%%%%%%%%%%%%%%%%%%%%%%%%%%%%%%%%%%%%%%%%%%%%%%
\section{Proof of Theorem~\ref{Theorem:Sigma}}\label{Section:Sigma}

\subsection*{Proof of Theorem~\ref{Theorem:Sigma}\ref{Part:SigmaPrimitive}}
The proof of Theorem~\ref{Theorem:Sigma}\ref{Part:SigmaPrimitive} is
similar to the proof of Theorem~\ref{Theorem:Primitive}.  We first require
the following version of Lemma~\ref{Lemma:Phi} for the sum-of-divisors
function.

\begin{lemma}\label{Lemma:Sigma}
  Let $\mathcal{P}$ denote a finite set of primes.  Then
  \begin{equation*}
    \left\{ \frac{\sigma(n)}{n} : \text{$n$ squarefree, $p \nmid n$ for all $p \in \mathcal{P}$}\right\}
  \text{ is dense in $[1,\infty)$}.
  \end{equation*}
\end{lemma}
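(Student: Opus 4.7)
The plan is to mirror the proof of Lemma~\ref{Lemma:Phi} essentially verbatim, substituting the sum-of-divisors identity
\[
\frac{\sigma(n)}{n} \,=\, \prod_{q \mid n}\left(1 + \frac{1}{q}\right) \qquad \text{(for squarefree $n$)}
\]
in place of the corresponding product for $\phi$. Given a target $\eta \in [1,\infty)$, I will set $\xi = 1/\eta \in (0,1]$ and, for $t > \frac{1}{\xi}\log\max\mathcal{P}$, define
\[
n_t \,=\, \prod_{e^{\xi t} \leq q < e^t} q.
\]
By construction, $n_t$ is squarefree and coprime to every prime in $\mathcal{P}$.

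The analytic input I will need is the $\sigma$-analogue of Mertens' third theorem, namely
\[
\prod_{q \leq x}\left(1 + \frac{1}{q}\right) \,\sim\, \frac{6e^{\gamma}}{\pi^{2}} \log x.
\]
This follows from the factorization $1 + 1/q = (1 - 1/q^{2})/(1 - 1/q)$ together with $\prod_{q}(1 - 1/q^{2}) = 1/\zeta(2) = 6/\pi^{2}$ and \eqref{eq:Mertens}. Taking the quotient of this asymptotic at $x = e^{t}$ and $x = e^{\xi t}$ then yields
\[
\frac{\sigma(n_t)}{n_t} \,=\, \prod_{e^{\xi t} \leq q < e^t}\left(1 + \frac{1}{q}\right) \,\sim\, \frac{\log(e^{t})}{\log(e^{\xi t})} \,=\, \frac{1}{\xi} \,=\, \eta
\]
as $t \to \infty$, which places $\eta$ in the closure of the set. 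For $\eta = 1$ itself one may simply take $n = 1$.

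I anticipate no serious obstacle: once the product asymptotic is available, the density argument is structurally identical to Lemma~\ref{Lemma:Phi}, with the sole bookkeeping change being that $\sigma(n)/n \in [1,\infty)$ rather than $[0,1]$, which is matched by the reciprocal substitution $\xi \leftrightarrow 1/\eta$. As in the earlier lemma, one could alternatively bypass Mertens and invoke \cite[Prop.~8.8]{DeKoninck} for a fully elementary derivation of the same asymptotic.
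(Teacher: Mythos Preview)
Your proposal is correct and follows essentially the same approach as the paper: both arguments derive the asymptotic $\prod_{q \leq x}(1+1/q) \sim \tfrac{6e^{\gamma}}{\pi^{2}}\log x$ from Mertens' theorem and $\zeta(2)=\pi^{2}/6$, then take a product of primes over an interval of the form $[e^{\alpha t}, e^{\beta t})$ so that the ratio of logarithms recovers the target. The only cosmetic difference is the parametrization---the paper uses $\xi \geq 1$ as the target and the interval $[e^{t}, e^{\xi t})$, whereas you set $\xi = 1/\eta \in (0,1]$ and keep the interval $[e^{\xi t}, e^{t})$ from Lemma~\ref{Lemma:Phi}---but the content is identical.
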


\begin{proof}
  Let $Q = Q(t) = \prod_{q\leq t} q$.  Then Mertens' third theorem
  \eqref{eq:Mertens}, the Euler product formula, and the evaluation
  $\zeta(2) = \frac{\pi^2}{6}$ yield
  \begin{equation*}
    \frac{\sigma(Q)}{Q}
    = \prod_{q \leq t}\frac{1+q}{q}
    = \prod_{q \leq t}\left(1 + \frac{1}{q} \right)
    = \frac{\prod_{q \leq t}(1 - 1/q^2 )}{\prod_{q \leq t}(1 - 1/q)}
    \sim \frac{6/\pi^2}{e^{-\gamma}/\log t} = \frac{6e^{\gamma}}{\pi^2} \log t
  \end{equation*}
  as $t \to \infty$.  
  Let $\xi \geq 1$ and define
  $n_t = \prod_{e^t \leq q < e^{\xi t}} q$, in which
  $\log t > \max \mathcal{P}$.
  Then $n_t$ is squarefree, $p \nmid n_t$ for all $p \in \mathcal{P}$, and
  \begin{equation*}
    \frac{\sigma(n_t)}{n_t} \sim \frac{\log (e^{\xi t})}{\log (e^t)} = \xi
  \end{equation*}
  as $t \to \infty$.
\end{proof}

Fix $\xi \in (0,\infty)$ and $0<\delta < 1$.  Let
$x \geq \max\big\{\frac 4 3,\frac{7\xi}{6}\big\}$.  Then
Lemma~\ref{Lemma:Sigma} provides a squarefree $b$ such that
\begin{equation*}
  (b,6) = 1 \quad \text{and} \quad
  \frac{\sigma(b)}{b}\in \left(\frac{6x}{7\xi},\frac{6x}{7\xi}\left(\frac{1}{1-\delta}\right) \right).
\end{equation*}
A second appeal to Lemma~\ref{Lemma:Sigma} yields a squarefree $a'$
such that
\begin{equation*}
  (a',6b) = 1 \quad \text{and}\quad
  \frac{\sigma(a')}{a'}  \in \left(\frac{3x}{4},\frac{3x}{4} \, (1+ \delta)\right).
\end{equation*}
Our choice of $x$ ensures that the intervals specified are contained
in $(1,\infty)$. Let $a = 3a'$ and observe that
\begin{equation*}
  \frac{\sigma(a)}{a} = \frac{4}{3} \frac{\sigma(a')}{a'} \in
  \big(x,x(1+ \delta)\big).
\end{equation*}
Consequently,
\begin{equation}\label{eq:Trick67}
  \frac{\sigma(a)}{a} \frac{b}{\sigma(b)}
  \in  \left(\frac{7}{6}\,\xi(1-\delta),\frac{7}{6}\,\xi(1+\delta) \right).
\end{equation}

Define the polynomials $f_1,f_2,f_3,f_4$ as in the proof of
Theorem~\ref{Theorem:Primitive}, in which we showed that the application
of Dickson's conjecture to this family is permissible.  Dickson's
conjecture provides infinitely many $T$ such that $f_1(T)$, $f_2(T)$,
$f_3(T)$, and $f_4(T)$ are prime.  For such $T$, the primes
\begin{equation*}
  p=f_1(T)\qquad \text{and}\qquad p+2=f_2(T)
\end{equation*}
satisfy $p+1 = 2a \, f_4(T)$ and $p-1 = 4b \, f_3(T)$.  Consequently,
\eqref{eq:Trick67} ensures that
\begin{align*}
  \frac{\sigma(p+1)}{\sigma(p-1)}
  &=\frac{\sigma(p+1)}{p+1} \frac{p-1}{\sigma(p-1)} \frac{p+1}{p-1} \\
  &=  \frac{\sigma(p+1)}{p+1} \frac{p-1}{\sigma(p-1)} \big(1+o(1)\big)\\
  &=   \frac{\sigma(2a \, f_4(T))}{2a \, f_4(T)}
    \frac{4b \, f_3(T)}{\sigma(4b \, f_3(T))} \big(1+o(1)\big)\\
  &= \frac{\sigma(2) \, \sigma(a) \,\sigma(f_4(T))}{2af_4(T)}
    \frac{4b \, f_3(T)}{\sigma(4) \, \sigma(b) \, \sigma(f_3(T))} \big(1+o(1)\big)\\
  &=  \frac{3\sigma(a)}{2a}  \frac{f_4(T)+1}{f_4(T)}  \frac{4b}{7 \sigma(b)}  \frac{f_3(T)}{f_3(T)+1} \big(1+o(1)\big)\\
  & =  \frac{6}{7}  \frac{\sigma(a)}{a}\frac{b}{\sigma(b)}  \big(1+o(1)\big)
\end{align*}
belongs to $\big(\xi(1-\delta),\xi(1+\delta)\big)$ for large $T$. \qed

\subsection*{Proof of Theorem~\ref{Theorem:Sigma}\ref{Part:SigmaSchinzelPrime}}
Since the proof of
Theorem~\ref{Theorem:Sigma}\ref{Part:SigmaSchinzelPrime} is similar to the
proof of Theorem~\ref{Theorem:SchinzelPrime}, we only sketch the details.
First, a simple modification of Lemma~\ref{Lemma:Sigma} shows that for
any finite set $\mathcal{P}$ of primes that does not contain $2$, the
set
\[
  \left\{ \frac{\sigma(n)}{n} : \text{$n$ squarefree and even,
      $p \nmid n$ for all $p \in \mathcal{P}$}\right\}
\]
is dense in $\left[\frac{3}{2},\infty\right)$.  Let
$\xi \in \left[\frac{3}{2},\infty\right)$ and mimic the proof of
Theorem~\ref{Theorem:SchinzelPrime} to find an even squarefree $Q = Q(x)$
such that $\frac{\sigma(Q)}{Q} \to \xi$ as $x\to \infty$.  Apply
Theorem~\ref{Theorem:Chen} and obtain an $S = S(x)$ with at most two prime
factors, both of which are greater than $\max\{Q,x\}$, such that
$p = p(x) = QS-1$ is prime.  Then $\frac{\sigma(S)}{S}\to1$ as
$x \to \infty$ and hence
\begin{equation*}
  \frac{\sigma(p+1)}{\sigma(p)} = \frac{ \sigma(QS) }{\sigma(QS-1)} =
  \frac{ \sigma(Q) \, \sigma(S) }{ QS} = \frac{\sigma(Q)}{Q}
  \frac{\sigma(S)}{S} \to \xi. \hfill\qed
\end{equation*}

\subsection*{Proof of Theorem~\ref{Theorem:Sigma}\ref{Part:SigmaSchinzel3}}
Since the proof of Theorem~\ref{Theorem:Sigma}\ref{Part:SigmaSchinzel3} is
similar to the proof of Theorem~\ref{Theorem:Schinzel3}, we only sketch
the details.  Let $\xi\in [2,\infty)$ and mimic the proof of
Theorem~\ref{Theorem:Schinzel3} to find a squarefree $Q=Q(x)$ which is
divisible by 6 such that $\frac{\sigma(Q)}{Q}\to \xi$ as
$x\to \infty$.
	
Define the polynomials $f_1,f_2,f_3$ as in the proof of
Section~\ref{Section:Schinzel3} in which we showed that the application of
Dickson's conjecture to this family is permissible. Thus, we can find
arbitrarily large $T$ such that
$f_1(T)=T$, $p=f_2(T)=QT-1$, and $p+2=f_3(T)=QT+1$ are simultaneously
prime and hence
\[
  \frac{\sigma(p+1)}{\sigma(p)}=\frac{\sigma(QT)}{\sigma(QT-1)} =
  \frac{\sigma(Q) \, \sigma(T)}{QT}=\frac{\sigma(Q)}{Q} \frac{T+1}{T}
  \to \xi.\qed
\]

\section{Numerical examples}

Our methods of proof are transparent enough that they permit us to
construct numerical examples whose
totient and divisor-sum quotients approximate various mathematical
constants surprisingly well (much better than can be obtained by
brute force alone).  Tables \ref{Table:Primitive},
\ref{Table:SchinzelPrime}, \ref{Table:Schinzel3},
\ref{Table:SigmaPrimitive}, \ref{Table:SigmaSchinzelPrime},
and  \ref{Table:SigmaSchinzel3} showcase various examples for
each of the theorems proven above.

\subsection*{Computational differences}

For the sake of optimization, our computation of numerical examples involves
slightly different methods than those provided in the
proofs.
In particular, our provided proofs of Lemmas \ref{Lemma:Phi} and
\ref{Lemma:Sigma} construct a product of consecutive
primes between $e^{\xi t}$ and $e^t$.  
Our computation takes a more na\"ive but more efficient
process: begin with $1$, and repeatedly multiply by the
next smallest $q \notin \mathcal{P}$ so that
$\frac{\phi(n)}{n} \geq \xi$ (resp., for Lemma~\ref{Lemma:Sigma},
$\frac{\sigma(n)}{n} \leq \xi$); convergence of this process is
guaranteed by the fact that $\prod_q (1-1/q)$ diverges to $0$ (resp.,
$\prod_q (1+1/q)$ diverges to $\infty$), so the sequence we construct is
monotonically decreasing (resp., increasing) and is bounded tightly
below (resp., above) by $\xi$.

For Theorem~\ref{Theorem:Primitive}, Theorem~\ref{Theorem:Schinzel3},
Theorem~\ref{Theorem:Sigma}\ref{Part:SigmaPrimitive}, and
Theorem~\ref{Theorem:Sigma}\ref{Part:SigmaSchinzel3}, the method of
construction is otherwise the same, relying on the same
polynomial-based approach together with Dickson's conjecture.  For
Theorem~\ref{Theorem:SchinzelPrime} and
Theorem~\ref{Theorem:Sigma}\ref{Part:SigmaSchinzelPrime}, instead of the
unconditional method of proof based on Theorem~\ref{Theorem:Chen} provided
in the paper, we instead took a polynomial/Dickson approach similar to
that of Theorem~\ref{Theorem:Schinzel3} and
Theorem~\ref{Theorem:Sigma}\ref{Part:SigmaSchinzel3} based on
Lemma~\ref{Lemma:Phi} and Lemma~\ref{Lemma:Sigma}, since we found no
straightforward numerical implementation of Theorem~\ref{Theorem:Chen}.

% 25 digits per row
\newcommand{\pwidth}{\widthof{0000000000000000000000000}}
\renewcommand{\arraystretch}{1.5}

\begin{table}[H]\footnotesize
  \[
    \begin{array}{r@{\;=\;}l|m{\pwidth-\widthof{00000}}l}
      \multicolumn{2}{c|}{\xi}
      & \multicolumn{1}{c}{p}
      & \multicolumn{1}{c}{\frac{\phi(p+1)}{\phi(p-1)}} \\ \midrule
      \gamma
      & 0.577215664901532960\dots
      & \seqsplit{423723808834634215177323136310347257917981529437}
      & \underline{0.5772156649015328}95\dots \\
      \frac \pi {10}
      & 0.31415926535897932\dots
      & \seqsplit{8219517414757353575739578784498983486285599838387837563816565056272351869}
      & \underline{0.314159265358979}29\dots \\
      \frac \pi 2
      & 1.570796326794\dots
      & \seqsplit{166699917024666766393292717283478616682999425902809932261}
      & \underline{1.5707963267}82\dots \\
      \frac e {10}
      & 0.271828182845904523\dots
      & \seqsplit{340751879321709831822472985002608295768606668541034063696514572757394102608704519864613389}
      & \underline{0.2718281828459045}01\dots \\
      \frac 1 e
      & 0.36787944117144232\dots
      & \seqsplit{1136836570875009647818990149600716232565777311442730480596395907730326829}
      & \underline{0.367879441171442}29\dots \\
      \sqrt 2
      & 1.414213562373095048\dots
      & \seqsplit{99936342969417150404929694194402869221132671592712932741}
      & \underline{1.4142135623730950}34\dots \\
      \sqrt 3
      & 1.732050807568877\dots
      & \seqsplit{6844049149066600460918767470146205475936305896538422749570651356283525048962967661}
      & \underline{1.7320508075688}62\dots \\
      \frac {\sqrt 5+1} 2
      & 1.6180339887498948\dots
      & \seqsplit{13069649048652430795143497540458872226218189143056594811839738637429981}
      & \underline{1.61803398874989}32\dots \\
      \frac {\sqrt 5-1} 2
      & 0.61803398874989484\dots
      & \seqsplit{113819885725207836594181379451145300826370298475149690990486037}
      & \underline{0.618033988749894}93\dots \\
      \log 2
      & 0.693147180\dots
      & \seqsplit{763278080829361407712397}
      & \underline{0.6931471}72\dots \\
      \log 3
      & 1.09861228866810969\dots\dots
      & \seqsplit{86435370621522915217536274921197129646793034461}
      & \underline{1.098612288668109}05\dots \\
    \end{array}
  \]
  \caption{Numerical examples for Theorem~\ref{Theorem:Primitive}.}
  \label{Table:Primitive}
\end{table}

\begin{table}[H]\footnotesize
  \[
    \begin{array}{r@{\;=\;}l|ll}
      \multicolumn{2}{c|}{\xi}
      & \multicolumn{1}{c}{p}
      & \multicolumn{1}{c}{\frac{\phi(p+1)}{\phi(p)}} \\ \midrule
      \frac \pi {10}
      & 0.314159265\dotso
      & \seqsplit{1902037158772097}
      & \underline{0.3141592}33\dots \\
      \frac \pi {20}
      & 0.1570796326794\dots
      & \seqsplit{65230510948153143551387418989}
      & \underline{0.15707961697}22\dots \\
      \frac e {10}
      & 0.2718281828\dots
      & \seqsplit{9240530296299581}
      & \underline{0.2718281}556\dots \\
      \frac 1 e
      & 0.367879441\dots
      & \seqsplit{5309646891817189}
      & \underline{0.3678794}04\dots \\
      \frac {\sqrt 2} {10}
      & 0.141421356\dots
      & \seqsplit{4002770936541226705231153047269}
      & \underline{0.1414213}42\dots \\
      \sqrt 2 - 1
      & 0.414213562\dots
      & \seqsplit{233570456771714761}
      & \underline{0.4142135}20\dots \\
      \frac {\sqrt 3} {10}
      & 0.173205080\dots
      & \seqsplit{721221963089661856995482309}
      & \underline{0.1732050}63\dots \\
      \frac {\sqrt 5} {10}
      & 0.223606797\dots
      & \seqsplit{1061017350953476949129}
      & \underline{0.2236067}75\dots \\
      \frac {\sqrt 7} {10}
      & 0.264575131\dots
      & \seqsplit{184295506315169}
      & \underline{0.2645751}04\dots \\
      \frac{\sqrt 5+1}{20}
      & 0.161803398\dots
      & \seqsplit{94721096130489558305686109}
      & \underline{0.1618033}82\dots \\
    \end{array}
  \]
  \caption{Numerical examples for Theorem~\ref{Theorem:SchinzelPrime}}
  \label{Table:SchinzelPrime}
\end{table}

\begin{table}[H]\footnotesize
  \[
    \begin{array}{r@{\;=\;}l|ll}
      \multicolumn{2}{c|}{\xi}
      & \multicolumn{1}{c}{p}
      & \multicolumn{1}{c}{\frac{\phi(p+1)}{\phi(p)}} \\ \midrule
      \frac \pi {10}
      & 0.314159265\dots
      & 1902037158772097
      & \underline{0.3141592}33\dots \\
      \pi-3
      & 0.141592653\dots
      & 37850921999916257860282849163969
      & \underline{0.1415926}39\dots \\
      \frac \pi {20}
      & 0.15707963\dots
      & 2762774807373943331969
      & \underline{0.157079}47\dots \\
      \frac e {10}
      & 0.2718281828\dots
      & 9240621837106421
      & \underline{0.2718281}556\dots \\
      \frac {\sqrt 2} {10}
      & 0.14142135623\dots
      & 4003599638847875898651948718589
      & \underline{0.141421342}09\dots \\
      \frac {\sqrt 3}{10}
      & 0.173205080\dots
      & 721231627248456517343440289
      & \underline{0.1732050}63\dots \\
      \frac {\sqrt 5}{10}
      & 0.223606797\dots
      & 1061064248215841845709
      & \underline{0.2236067}75\dots \\
      \frac {\sqrt 7}{10}
      & 0.264575131\dots
      & 184327276293689
      & \underline{0.2645751}04\dots \\
      \frac {\sqrt 5+1} {20}
      & 0.161803398\dots
      & 94721096130489558305686109
      & \underline{0.1618033}82\dots \\
    \end{array}
  \]
  \caption{Numerical examples for Theorem~\ref{Theorem:Schinzel3}}
  \label{Table:Schinzel3}
\end{table}

\begin{table}[H]\footnotesize
  \[
    \begin{array}{r@{\;=\;}l|m{\pwidth}l}
      \multicolumn{2}{c|}{\xi}
      & \multicolumn{1}{c}{p}
      & \multicolumn{1}{c}{\frac{\sigma(p+1)}{\sigma(p-1)}} \\ \midrule
      \gamma
      & 0.577215664901532\dots
      & \seqsplit{285659715165349572896202438481583805634555808139263750257162232532026578053941}
      & \underline{0.5772156649015}27\dots \\
      \pi
      & 3.1415\dots
      & \seqsplit{26469212222568419838609212640874686000298158238701096717482341268845579849569676131609566546557369367349279138922992270763544429}
      & \underline{3.14}07 \\
      \frac \pi 2
      & 1.57079632679489661\dots
      & \seqsplit{271445221589612296917454398280249786108341248083170868563698778989}
      & \underline{1.570796326794896}75\dots \\
      e
      & 2.718281\dots
      & \seqsplit{201426586111496390059982859052378729528572579361216852839150744818500432120229}
      & \underline{2.7182}77\dots \\
      \sqrt 2
      & 1.414213562373095\dots
      & \seqsplit{1300421337603424629894146725516266587797672109490592798687340114429}
      & \underline{1.4142135623730}89\dots \\
      \sqrt 3
      & 1.73205080756887729\dots
      & \seqsplit{5001304679832232346811636913292219681806190173052913912408611416987702190648767132709}
      & \underline{1.732050807568877}44\dots \\
      \sqrt 5
      & 2.2360679774997\dots
      & \seqsplit{20138170271534222884581768517669808849964402993328707829576024653820857989}
      & \underline{2.23606797749}80\dots \\
      \frac {\sqrt 5 + 1} {2}
      & 1.618033988749894\dots
      & \seqsplit{2768409745128994233528433338776786616521251020681096425364422069}
      & \underline{1.6180339887498}88\dots \\
      \frac {\sqrt 5 - 1} {2}
      & 0.6180339887498948\dots
      & \seqsplit{10854285409284254226574808906161529175312651234309760660428521447231541940154141}
      & \underline{0.61803398874989}54\dots \\
      %e-2
      %& 0.7182818284590451
      %& \seqsplit{2183973844179912820715857664183183141761240089600442744764962956189039117742222861}
      %& 0.7182818284590452 \\
      \log 2
      & 0.6931471805599\dots
      & \seqsplit{158315132994548322759751536776201939944008616235544504225821}
      & \underline{0.6931471805}601\dots \\
      \log 3
      & 1.09861228866810969\dots
      & \seqsplit{212204529605325097826520886692665882479828302131195312459828945357}
      & \underline{1.098612288668109}89\dots
    \end{array}
  \]
  \caption{Numerical examples for
    Theorem~\ref{Theorem:Sigma}\ref{Part:SigmaPrimitive}.}
  \label{Table:SigmaPrimitive}
\end{table}

\begin{table}[H]\footnotesize
  \[
    \begin{array}{r@{\;=\;}l|m{\pwidth}l}
      \multicolumn{2}{c|}{\xi}
      & \multicolumn{1}{c}{p}
      & \multicolumn{1}{c}{\frac{\sigma(p+1)}{\sigma(p)}} \\ \midrule
      10\gamma
      & 5.77215\dots
      & \seqsplit{82878345908080079551179656289238279189797435695514974872920897457004916122789562389}
      & \underline{5.772}21\dots \\
      \pi
      & 3.1415926\dots
      & \seqsplit{203351964077675489}
      & \underline{3.14159}57\dots \\
      2 \pi
      & 6.283185\dots
      & \seqsplit{655193122132808448338702066088021133128673476003092904430789475344820749138959120591709442615982404569457060616806991988883790296018540849009}
      & \underline{6.2831}91\dots \\
      \frac \pi 2
      & 1.57079632\dots
      & \seqsplit{22955076440560177}
      & \underline{1.570796}48\dots \\
      e
      & 2.7182818\dots
      & \seqsplit{1716574977543884369}
      & \underline{2.71828}21\dots \\
      %2 \sqrt 2
      %& 2.82842712\dots
      %& \seqsplit{190118645571832529}
      %& \underline{2.828427}40\dots \\
      %2\sqrt3
      %& 3.4641016151377544
      %& \seqsplit{3062662651938969523454009}
      %& 3.4641019615359219 \\
      \sqrt 3
      & 1.73205080\dots
      & \seqsplit{156513047792653}
      & \underline{1.732050}98\dots \\
      \sqrt 5
      & 2.236067\dots
      & \seqsplit{4852141797161}
      & \underline{2.2360}70\dots \\
      \frac{\sqrt 5 + 1}{2}
      & 1.6180339\dots
      & \seqsplit{18106083326748793}
      & \underline{1.61803}41\dots \\
      %\frac{\sqrt 5 + 1}{2} + 1
      %& 2.6180339
      %& \seqsplit{3672183246917876210129}
      %& 2.6180342 \\
    \end{array}
  \]
  \caption{Numerical examples for
    Theorem~\ref{Theorem:Sigma}\ref{Part:SigmaSchinzelPrime}.}
  \label{Table:SigmaSchinzelPrime}
\end{table}

\begin{table}[H]\footnotesize
  \[
    \begin{array}{r@{\;=\;}l|m{\pwidth}l}
      \multicolumn{2}{c|}{\xi}
      & \multicolumn{1}{c}{p}
      & \multicolumn{1}{c}{\frac{\sigma(p+1)}{\sigma(p)}} \\ \midrule
      10\gamma
      & 5.7721566\dots
      & \seqsplit{2595684868506848043313701558218369992865598970206944461532321445567887878042009056424590269}
      & \underline{5.77215}72\dots \\
      \pi
      & 3.14159265\dots
      & \seqsplit{2007224256303311429}
      & \underline{3.141592}96\dots \\
      2\pi
      & 6.28318530\dots
      & \seqsplit{6561189247647575857894512417012071976627284592861948711670209688265586319709280421920002799868898085933789604550135691104643715461617425356689}
      & \underline{6.283185}93\dots \\
      e
      & 2.7182818\dots
      & \seqsplit{1717018302510268229}
      & \underline{2.71828}21\dots \\
      %2e
      %& 5.4365636\dots
      %& \seqsplit{46088769572817433246865579864494565630376982690703718834362208905129}
      %& \underline{5.43656}42\dots \\
      %2\sqrt 2
      %& 2.82842712\dots
      %& \seqsplit{190126834209486929}
      %& \underline{2.828427}40\dots \\
      %2\sqrt 3
      %& 3.46410161\dots
      %& \seqsplit{3062893579266219054225869}
      %& \underline{3.464101}96\dots \\
      \sqrt 5
      & 2.2360679\dots
      & \seqsplit{285009842420045757101}
      & \underline{2.23606}82\dots \\
      %\frac {\sqrt 5 + 1} {2} + 1
      %& 2.6180339\dots
      %& \seqsplit{3674318157546182801069}
      %& \underline{2.61803}42\dots \\
    \end{array}
  \]
  \caption{Numerical examples for
    Theorem~\ref{Theorem:Sigma}\ref{Part:SigmaSchinzel3}.}
  \label{Table:SigmaSchinzel3}
\end{table}

\bibliographystyle{plain}

\bibliography{PRBTP2}

\end{document}